\theoremstyle{plain}
\newtheorem{theorem}{Theorem}[section]
\newtheorem{lemma}[theorem]{Lemma}
\newtheorem{definition}[theorem]{Definition}
\theoremstyle{remark}
\theoremstyle{question}
\newcommand{\il}{{\ensuremath{\textup{\textbf{IL}}}}\xspace}
\newcommand{\extil}[1]{\ensuremath{\textup{\textbf{IL}}{\sf\ensuremath{#1}}}\xspace}
\newcommand{\gl}{{\ensuremath{\textup{\textbf{GL}}}}\xspace}
\newcommand{\K}{{\ensuremath{\textup{\textbf{K}}}}\xspace}
\newcommand{\Kfour}{{\ensuremath{\textup{\textbf{K}}}\mathbf{4}}\xspace}
\begin{document}

  \title{The closed fragment of \il is PSPACE hard} 
  \author{F\'elix Bou and Joost J. Joosten} 
  \date{2011}
\maketitle

\begin{abstract} 
In this paper we consider $\il_0$, the closed fragment of the basic interpretability logic \il. We show that we can translate $\gl_1$, the one variable fragment of G\"odel-L\"ob's provabilty logic \gl, into $\il_0$. Invoking a result on the PSPACE completeness of $\gl_1$ we obtain the PSPACE hardness of $\il_0$. 
\end{abstract}

\section{Introduction}

For a propositional logic $\sf L$, the closed fragment --we write ${\sf L}_0$-- of that logic consists of those theorems of $\sf L$ that do not contain any propositional variables at all. For various logics, it is known that the closed fragment is a lot easier than the full logic itself. The simplicity of the closed fragment can be captured by the complexity class of a decision procedure of theoremhood. Moreover, in all cases where $L_0$ is known to be simpler than $L$ in this sense, we have a set of \emph{normal forms} for ${\sf L}_0$ and a normal form theorem to the effect that each closed formula can be written in a unique way as a special combination of normal form formulas.

Perhaps the most canonical example of this phenomenon is classical propositional
logic. Theoremhood in classical propositional logic is known to be co-NP complete
whereas the closed fragment is decidable in LOG-time. In this case, by
definition, the only two formulas in normal form are $\top$ and $\bot$.
For various modal logics the situation is similar but slightly
different. For the provability logic \gl, theoremhood is known to be a
PSPACE complete problem (see \cite[Theorem~18.29]{ChZa97}), whereas
provability of formulas in the closed fragment is known to be
PTIME decidable (see \cite[Theorem 9]{ChaRy03}). Moreover, the normal
form theorem (\cite[Chapter~7]{Boo93}) states that each formula in the
closed fragment is provably equivalent to a Boolean combination of
formulas of the form $\Box^n \bot$ with $n\in \omega$. 

Interpretability logics arise as natural extensions of \gl.
The logic \gl has only one modal operator $\Box A$ to capture that ``$A$
is provable in some basic theory $T$''. Interpretability logics have an
additional binary modality $A\rhd B$ to capture that ``the theory $T +A$
interprets the theory $T+B$''.

These interpretability logics are always defined as some core part \il as defined below, together with some additional principles. However, as soon as the additional principles prove some rather weak principle $\sf F$, the technical details of which are irrelevant for the moment, then closed interpretability formulas can be expressed without the modality $\rhd$ and the normal forms are the same as those of \gl: Boolean combination of formulas of the form $\Box^n \bot$ with $n\in \omega$ (see \cite{HavS91}). It is good to stress here that all interpretability logics with some interesting meta-mathematical content do contain the principle $\sf F$. For logics below \extil{F} and in particular for \il itself, it is not known if there exists a natural set of normal forms. 

Not for all modal logics it is the case that the $L_0$ is simpler than
$L$. In particular, it is known that the minimal modal logic $\K$ and
its closed fragment $\K_0$ are both PSPACE complete (see
\cite[Corollary~4]{ChaRy03}). The same also happens for the modal logic
$\Kfour$ of transitive frames (see also \cite{ChaRy03}).

We shall see in this paper that the logic \il is like these logics $\K$ and $\Kfour$ in that also the closed fragment of \il is PSPACE hard thereby settling an open question in \cite{Visser1997} in the negative as to whether the closed fragment allows a nice characterization.

\section{Interpretability logics}

Interpretability logics have been primarily used to study in a
formalized setting the notion of relativized interpretability which is
captured by a binary modal operator $\rhd$. The phrase $p\rhd q$ is to
be read as ``($T$ together with the translation of $p$) interprets ($T$
together with the translation of $q$)" for some base theory $T$.
Different theories $T$ prove different modal principles to hold.
However, all theories that allow for coding of syntax and thus for
formalizing the notion of interpretability do validate some core logic
which is called \il.

\subsection{The logic \il}
We recall that \gl is the normal modal logic with one modality $\Box$ whose non-logical axioms are instantiations of the following axiom schemes.

\begin{enumerate}
\item 
$\Box(A\to B) \to (\Box A \to \Box B)$

\item
$\Box (\Box A \to A) \to \Box A$
\end{enumerate}
It is well known that \gl proves the transitivity axiom, that is,
\[
\Box A \to \Box \Box A.
\]
The logic \il is formulated in a propositional modal logic with two
modalities $\Box$ and $\rhd$. We shall use the following reading
conventions. The strongest binding operators are $\neg$ and $\Box$
followed by $\vee$ and $\wedge$ which in turn bind stronger than
$\rhd$. The weakest binding connectives are the implications $\leftrightarrow$ and $\to$. We shall write $\Diamond \varphi$ as shorthand for $\neg \Box \neg \varphi$.

\begin{definition}
The logic \il is a normal modal logic containing \gl whose rules are Modus Ponens and Necessitation and whose axioms other than all propositional tautologies are the instances of the following axiom schemes.

\begin{description}
\item[$J1$]
$\Box (A \to B) \to A\rhd B$

\item[$J2$]
$(A \rhd B) \wedge (B\rhd C) \to A\rhd C$

\item[$J3$]
$(A\rhd C) \wedge (B\rhd C) \to A\vee B \rhd C$

\item[$J4$]
$A\rhd B \to (\Diamond A \to \Diamond B)$

\item[$J5$]
$\Diamond A \rhd A$
\end{description}

\end{definition}
It follows from $J1$ and $J4$ that $\Box$ is expressible in terms of $\rhd$ within \il:
\[
\il \vdash \Box A \leftrightarrow \neg A \rhd \bot.
\]
The logic \extil{F} is obtained by adding the axiom $\sf F$ to \il.
\[
{\sf F} \ := \ \ \ \ \Diamond A \to \neg(A\rhd \Diamond A)
\] 
This principle can be seen as a natural generalization of G\"odel's second incompleteness theorem. G\"odel's second incompleteness theorem states that any recursive theory, whenever consistent, does not prove its own consistency. The principle  $\sf F$ states that any recursive theory, whenever consistent, does not even \emph{interpret} its own consistency.
\subsection{Semantics for \il}
The logic \il allows for natural Kripke semantics where the binary modality $\rhd$ is modeled by a ternary relation. Rather than working with a ternary relation, we tend to conceive the semantics for $\rhd$ as a collection of binary relations.

\begin{definition} 
An \il model, also called Veltman-model, is a quadruple $\langle W, R,
\{ S_x : x \in W \} , \Vdash \rangle$ where $W$ is a non-empty set of worlds, $R$ is a binary relation on $W$ that is transitive and conversely well-founded. For each $x\in W$, the binary relation $S_x$ is transitive and reflexive such that moreover
\begin{enumerate}
\item 
$yS_xz\to xRy \wedge xRz$;

\item
$xRyRz \to yS_xz$.

\end{enumerate}
The relation $\Vdash$ is a usual forcing relation that can be conceived as a map assigning to each propositional variable $p$ a subset $v(p)$ of $W$ of the worlds where $p$ holds. We write $x\Vdash p$ to indicate that $x\in v(p)$. The relation $\Vdash$ is extended to the set of all formulae by stipulating that
\begin{enumerate}
\item 
$x\Vdash \Box A \ \ \Leftrightarrow \ \ \forall y \ (xRy \to y \Vdash A)$;

\item 
$x\Vdash  A\rhd B \ \ \Leftrightarrow \ \ \forall y (xRy \wedge y \Vdash A \to \exists z (yS_xz \wedge z\Vdash B))$.

\end{enumerate}

\end{definition}
It is well-known that \il is sound and complete with respect to the class of all Veltman models (see \cite{deJonghJaparidze1998}).

\subsection{Fragments}
We shall denote by $\il_0$ the fragment of \il that consists of those modal formulae provable in \il that contain no propositional variables. Likewise, by $\gl_1$ we shall denote those formulas in the language of \gl that contain only one variable and are theorems of \gl.

\section{Translating $\gl_1$ into $\il_0$}

Let $p$ be the variable of $\gl_1$. We shall translate  this variable to some formula in the closed fragment of \il that essentially uses the $\rhd$ modality. It is easy to see that such formulas exist. Examples are given in \cite{Visser1997} (Section 5.4) and in \cite{VucovicTA}. The formula that we use here is equal to the one exposed in \cite{VucovicTA}.

\subsection{Some motivation for our translation}
In this section we shall expose a translation that reduces theoremhood of $\gl_1$ to $\il_0$ thereby establishing PSPACE hardness of the latter. The motivation for this translation is mainly semantical.

We will code the information as to whether $p$ holds or not in a world $x$ by making the formula $\top \rhd \Diamond \top$ true at $x$ if and only if $x\Vdash p$. To this extent we can glue to each $x$ two new worlds $x_1$ and $x_2$ with $xRx_1Rx_2$ and\footnote{We should add some more $S_x$ relations too on the already existing part of the model. For the motivational part here, we just focus on the newly added worlds $x_1$ and $x_2$.} $x_2S_xx_1 \ \Leftrightarrow \ x\Vdash p$. For this, all the $S_x$ relations in \il are sufficiently independent. This idea should motivate why we translate $p$ to $\Diamond \Diamond \top \to \top \rhd \Diamond \top$.

Moreover, with this approach the points that we are interested in, that is, the original points, become easily definable by the formula $\Diamond \Diamond \top$. Thus, when quantifying over points that we are interested in, we should relativize to our old domain. This explains why we shall translate $\Box A$ to $\Box(\Diamond \Diamond \top \to A^{\dag})$ where $A^{\dag}$ is the translation of $A$.

We shall see in Subsection \ref{section:ConstructionsOnModels} that we do not actually need to glue so many different new worlds to code all the behavior of the $x\vdash p$ for all $x$ in the model. By transitivity it suffices to add some worlds only at the top of the model.

\subsection{The translation}

We consider the following translation $\dag$ of formulas of $\gl_1$ into formulas of $\il_0$:

\begin{enumerate}
\item 
$\bot^{\dag} = \bot$

\item
$p^{\dag} = \Diamond \Diamond \top \to (\top \rhd \Diamond \top)$

\item
$(A\to B)^{\dag} = A^{\dag} \to B^{\dag}$

\item
$(\Box A)^{\dag} = \Box ( \Diamond \Diamond \top \to A^{\dag})$

\end{enumerate}
 
\begin{lemma}\label{lemma:translationPreservesTruth}
Let $A$ be a formula of \gl that only contains the propositional variable $p$. If $\gl \vdash A$, then $\il \vdash A^{\dag}$.
\end{lemma}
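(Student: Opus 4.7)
The plan is to argue by induction on the length of a derivation of $A$ in \gl. Since the translation $\dag$ commutes with $\bot$ and $\to$ (and hence with all Boolean connectives), the induction steps for Modus Ponens and for propositional tautologies are immediate: a propositional tautology in the variable $p$ translates to a propositional tautology in the (closed) formula $p^{\dag}$. For the Necessitation rule, from $\il \vdash A^{\dag}$ we propositionally obtain $\il \vdash \Diamond\Diamond\top \to A^{\dag}$, and then Necessitation inside \il delivers $\il \vdash \Box(\Diamond\Diamond\top \to A^{\dag}) = (\Box A)^{\dag}$.

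The only real work is showing that the translations of the two modal axioms of \gl are provable in \il. For the distribution axiom $\Box(A\to B)\to(\Box A \to \Box B)$, the translation is
\[
\Box\bigl(\Diamond\Diamond\top \to (A^{\dag}\to B^{\dag})\bigr) \to \bigl(\Box(\Diamond\Diamond\top \to A^{\dag}) \to \Box(\Diamond\Diamond\top \to B^{\dag})\bigr),
\]
which is derivable using only the normal modal reasoning available in \gl (and hence in \il), since from $\Diamond\Diamond\top \to (A^{\dag}\to B^{\dag})$ and $\Diamond\Diamond\top \to A^{\dag}$ one propositionally obtains $\Diamond\Diamond\top \to B^{\dag}$, and then one applies the usual $\Box$-distribution.

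The interesting case is the L\"ob axiom, which at first glance looks as though it might not translate to a L\"ob instance. Abbreviating $B := \Diamond\Diamond\top \to A^{\dag}$, the translation of $\Box(\Box A \to A)\to \Box A$ is
\[
\Box\bigl(\Diamond\Diamond\top \to (\Box B \to A^{\dag})\bigr) \to \Box B.
\]
The key observation is that the formula inside the outermost box is propositionally equivalent to $\Box B \to B$, because $\Diamond\Diamond\top \to (\Box B \to A^{\dag})$ and $\Box B \to (\Diamond\Diamond\top \to A^{\dag})$ are the same modulo rearranging the antecedents. Hence by Necessitation and $\Box$-distribution the translated formula is equivalent in \il to $\Box(\Box B \to B) \to \Box B$, which is precisely an instance of the L\"ob axiom of \il.

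I do not expect any real obstacle beyond spotting the rearrangement trick in the L\"ob case; everything else is bookkeeping. In particular, nothing in the proof uses the $\rhd$ axioms $J1$--$J5$: the translation $A^{\dag}$ of a \gl-theorem is already a \gl-theorem in the language with $\Box$ alone (enriched with the closed formula $\Diamond\Diamond\top$ serving as a relativization guard and $\top\rhd\Diamond\top$ appearing only as a subformula of $p^{\dag}$, treated as atomic).
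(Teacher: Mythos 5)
Your argument is essentially correct, and the L\"ob computation --- that $\Diamond\Diamond\top \to (\Box B \to A^{\dag})$ is propositionally equivalent to $\Box B \to B$ for $B = \Diamond\Diamond\top \to A^{\dag}$, so the translated L\"ob instance reduces to a genuine L\"ob instance --- is exactly the right observation. But your route differs from both proofs in the paper. The paper's primary proof is semantic: from an \il countermodel of $A^{\dag}$ it extracts a \gl countermodel of $A$ by restricting to the worlds satisfying $\Diamond\Diamond\top$ (together with the root); no axiom-by-axiom verification is needed, at the cost of invoking completeness of both logics. The paper's secondary, proof-theoretic proof is closer to yours in spirit, but it proceeds via Leivant's cut-free sequent calculus for \gl. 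Your Hilbert-style induction is more elementary still (no cut elimination needed), and your closing observation that the axioms $J1$--$J5$ play no role --- $A^{\dag}$ is already a \gl-theorem when $\top\rhd\Diamond\top$ is read as an atom --- is correct and worth making explicit.

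There is one point you glossed over, and it is precisely the point the paper's cut-free detour is designed to handle: a \gl-derivation of a formula in the single variable $p$ may pass through intermediate formulas containing \emph{other} propositional variables, on which $\dag$ is undefined, so your induction hypothesis is not even well-formed for such a derivation. The paper obtains the subformula property from cut-freeness to guarantee that only $p$ occurs anywhere in the proof. In your setting the fix is cheap: either extend $\dag$ by setting $q^{\dag}=q$ for every variable $q\neq p$ (the induction then goes through verbatim, and the conclusion $A^{\dag}$ is still closed when $A$ contains only $p$), or note that uniformly substituting $p$ for every other variable throughout the derivation yields a derivation of the same end formula in which every formula contains at most $p$. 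With either remark added, your proof is complete.
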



\begin{proof} 
  So, suppose $\il \not \vdash A^{\dag}$. Then, there is an \il model
  $\mathcal{M} = \langle W, R, \{ S_x : x \in W \} , \Vdash \rangle$
  and a world $w \in W$ such that $\mathcal{M},w \not \Vdash A^{\dag}$.
  Next, we consider the \gl model $\mathcal{N} = \langle W', R', \Vdash'
  \rangle$ defined by:
  \begin{enumerate}
    \item $W':= \{ w \} \cup \{ x \in W: \mathcal{M},x \Vdash \Diamond
      \Diamond \top \}$,
    \item $R':= R \cap (W' \times W')$,
    \item $x \Vdash' p$  iff  $\mathcal{M},x \Vdash p^{\dag}$ (for
      every $x \in W'$).
  \end{enumerate}
  We point out that the union defining $W'$ may be a non-disjoint one.
  Using the definition of $\mathcal{N}$ it is straightforward to prove (by induction on
  the length of the formula) that for every formula $B$ which only contains
  the propositional variable $p$, 
  \begin{center}
    $\mathcal{N},x \Vdash' B$  iff  $\mathcal{M},x \Vdash B^{\dag}$
    \quad (for every $x \in W'$).
  \end{center}
  In particular, we get that $\mathcal{N},w \not \Vdash' A$. Therefore, 
  $\gl \not \vdash A$.
\end{proof}

Lemma \ref{lemma:translationPreservesTruth} is the easier direction of what we shall see is an equivalence. In particular, the lemma allows for an easy proof-theoretic proof.

\begin{proof}
So, suppose $\gl \vdash A$. We know that \gl has a cut-free proof $\pi$ (see system $\mathcal{G}^1$ in \cite{LeivantGL1981}) which thus satisfies the sub-formula property. Consequently, each sequent in $\pi$ contains at most the variable $p$. It is an easy check that proofs only containing $p$ are preserved under $\dag$.
\end{proof}

\subsection{Construction on models}\label{section:ConstructionsOnModels}

In this subsection we shall prove the converse to Lemma \ref{lemma:translationPreservesTruth}.

\begin{lemma}\label{lemma:translationAndModels}
Let $A$ be a formula of \gl that only contains the propositional variable $p$. If $\il \vdash A^{\dag}$, then $\gl \vdash A$.
\end{lemma}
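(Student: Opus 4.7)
The plan is to prove the contrapositive: if $\gl \not\vdash A$ then $\il \not\vdash A^{\dag}$. By completeness of \gl, fix a Kripke model $\mathcal{N} = \langle W, R, \Vdash\rangle$ and a world $w$ with $\mathcal{N}, w \not\Vdash A$. Following the motivation in the previous subsection, my strategy is to glue just two fresh worlds on top of $\mathcal{N}$ (rather than two per old world, as the motivation suggested; this is the ``by transitivity it suffices'' remark) and to use them to encode the truth-value of $p$ at each old $x$ via whether $\top \rhd \Diamond \top$ holds at $x$.

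Concretely, let $W' := W \cup \{u_1, u_2\}$ with $u_1,u_2 \notin W$, and let $R'$ be the smallest transitive relation containing $R \cup \{(x, u_1) : x \in W\} \cup \{(u_1, u_2)\}$. For each $x \in W'$ I would define $S_x$ to be the reflexive and transitive closure on $R'(x)$ of the forced pairs $\{(y,z) : x R' y R' z\}$, augmented, only when $x \in W$ and $\mathcal{N}, x \Vdash p$, by the single extra pair $(u_2, u_1)$. The propositional valuation is inherited on $W$ and chosen arbitrarily on $\{u_1, u_2\}$. Call the resulting structure $\mathcal{M}$.

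The core step is then the truth lemma, proved by induction on $B$ in the language of $\gl_1$: for every $x \in W$, $\mathcal{M}, x \Vdash B^{\dag}$ iff $\mathcal{N}, x \Vdash B$. The base case $B = p$ is where the construction is tailor-made: every old $x$ witnesses $\Diamond \Diamond \top$ via the chain $x R' u_1 R' u_2$, so $p^{\dag}$ at $x$ reduces to $\top \rhd \Diamond \top$; the only $R'$-successor of $x$ failing $\Diamond \top$ is $u_2$, and $u_2$ acquires an $S_x$-successor forcing $\Diamond \top$ (namely $u_1$) precisely when $\mathcal{N}, x \Vdash p$. The induction step for $\Box$ is clean because $\Diamond \Diamond \top$ holds at exactly the old worlds, so $(\Box C)^{\dag}$ at $x$ quantifies $C^{\dag}$ over the original $R$-successors of $x$, and the induction hypothesis concludes.

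The step I expect to require the most care is the bookkeeping that $\mathcal{M}$ really is an \il-model after the addition of the pair $(u_2, u_1)$: one must check that this pair respects $y S_x z \to x R' y \wedge x R' z$ (it does, since both $u_1, u_2 \in R'(x)$), and that the transitive closure introduces no spurious $S_x$-pair that would disturb the truth-lemma analysis at $u_1$ or at old successors of $x$. Because $u_2$ is $R'$-terminal and $u_1$'s only non-reflexive $S_x$-successor is $u_2$, the closure produces only pairs already present by reflexivity; conversely well-foundedness of $R'$ is trivially preserved. Once this routine verification is done, the truth lemma applied at $w$ yields $\mathcal{M}, w \not\Vdash A^{\dag}$ and hence $\il \not\vdash A^{\dag}$.
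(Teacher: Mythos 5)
Your proposal is correct and follows essentially the same strategy as the paper: encode $p$ at an old world $x$ by whether $\top \rhd \Diamond\top$ holds there, realized by gluing a two-element $R$-chain on top of the \gl-model and toggling an $S_x$-link from the top element back to the middle one exactly when $x \Vdash p$, with $\Diamond\Diamond\top$ defining the old worlds. The only (harmless, in fact slightly cleaner) deviation is that you use a single shared pair $u_1,u_2$ seen by every old world, whereas the paper attaches a separate pair $e^{\flat},e^{\natural}$ above each endpoint of a finite tree-like model; your variant avoids the endpoint bookkeeping, and your verification of the Veltman-model conditions and of the truth lemma is sound.
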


\begin{proof}
By the completeness proofs of \il and \gl we know that it is sufficient to show that 
\begin{center}
$\forall A \ [\forall^{\mbox{\il-model}} \mathcal{M} \ \mathcal{M}
\models A^{\dag} \ \ \Longrightarrow \ \  \forall^{\mbox{\gl-model}}
\mathcal{N} \ \mathcal{N} \models A ]$,
\end{center}
or equivalently
\begin{center}
$\forall A \ [\exists^{\mbox{\gl-model}} \mathcal{N} \, \exists n {\in}
\mathcal{N} \ \mathcal{N}, n \Vdash A \ \ \Longrightarrow \ \
\exists^{\mbox{\il-model}} \mathcal{M} \, \exists m {\in} \mathcal{M}\
\mathcal{M},m \Vdash A^{\dag} ]$.
\end{center}
To this extent we shall exhibit a transformation on \gl models that
yields the desired \il model. Let $\mathcal{N}:= \langle W, R, \Vdash
\rangle$ be a \gl-model for some \gl-formula $A$ at most containing the
variable $p$. As \gl is complete with respect to finite tree-like
models we may indeed assume that $\mathcal{N}$ has is such a finite tree-like model.  By
E=$\{ e_1, \ldots, e_l \}$ we denote the set of end-points in
$\mathcal{N}$, which is of course finite. We consider two disjoint copies of $E$
(which are also disjoint with $W$),
namely $E^{\flat} = \{ e_1^{\flat}, \ldots, e_l^{\flat} \} $ and
$E^{\natural} = \{ e_1^{\natural}, \ldots, e_l^{\natural} \}$. The idea is to `glue' these additional points $e_i^{\flat}$ and $e_i^{\natural}$ as a little $R$ chain of length two above\footnote{W.r.t. the $R$-relation of course.} the end-points $e_i$ so that each old point in the model will satisfy $\Diamond \Diamond \top$.

As $R$ is conversely
well-founded it is the case that each $x\in M$ is $R^=$-below some $e_i$
(here $xR^=y$ is a shorthand for ``$xRy \vee x=y$''). 

Now, we consider an \il model $\mathcal{M}=\langle W', R', \{ S'_x : x \in W' \} 
\rangle$ satisfying:
\begin{enumerate}
  \item $W':= W \uplus  E^{\flat} \uplus E^{\natural}$,
  \item $R'$ is the transitive closure of 
    $R \cup \{ (e,e^{\flat}): e \in E \} \cup \{ (e^{\flat},
    e^{\natural}): e \in E \}$,
  \item $S'_{e^{\flat}} = \{ (e^{\natural}, e^{\natural}) \}$ (for
    every $e^{\flat} \in E^{\flat}$).
  \item $S'_{e^{\natural}} = \emptyset$ (for
    every $e^{\natural} \in E^{\natural}$).
  \item for every $x \in W$ it holds that
    \begin{center}
    $S'_x$ is the smallest transitive and reflexive relation on $\{y\mid xR'y\}$ that contains both $S_x$ and $R'$ restricted to  $\{y\mid xR'y\}$ such that moreover for every
    $e \in E$,\\
      $( e^{\natural} , e^{\flat} ) \in S'_x$ \quad iff \quad
      $\mathcal{N},x \Vdash p$ and $x R^= e$.
    \end{center} 
\end{enumerate}
It is very easy to check that there is a unique \il model satisfying these
conditions. We notice that in the definition of $W'$ we have used the
symbol $\uplus$ to emphasize that these unions are indeed disjoint ones,
and we have not introduced a valuation $\Vdash'$ in $\mathcal{M}$
because our purpose is only to evaluate closed formulas (like
$A^{\dag})$. 

\medskip
First of all we note that in $\mathcal{M}$ we can modally define the old points in $\mathcal{N}$
since $\{ x \in W': \mathcal{M},x \Vdash \Diamond \Diamond \top \} = W$. The next step is to prove 
that for every formula $B$ which
only contains the propositional variable $p$, 
\begin{center}
  $\mathcal{N},x \Vdash' B$  iff  $\mathcal{M},x \Vdash B^{\dag}$
  \quad (for every $x \in W$).
\end{center}
The proof of this claim proceeds by an induction on the length of $B$.
\begin{itemize}
  \item For $\top$ or $\bot$ the claim is vacuous.
 
 \item If $B=p$ we have that $p^{\dag} = \Diamond \Diamond \top \to
   \top\rhd \Diamond \top$. By construction, $\mathcal{M}, x \Vdash
   \Diamond \Diamond \top$.  By construction, in any $R$-successor of
   $x$ one can go by an $S_x$ transition to some $e_i^{\flat}$ where
   $\Diamond \top$ holds. Thus, indeed, $\mathcal{M},x\Vdash \top \rhd
   \Diamond \top$.
   
 \item On the other hand, if $\mathcal{N}, x\Vdash \neg p$, then again
   $\mathcal{M}, x \Vdash \Diamond \Diamond \top$. But in this case we
   can go via an $R$-transition to some $e_i^{\natural}$. By construction
   there is no $S_x$-transition from $e_i^{\natural}$ to any point where
   $\Diamond \top$ holds,  whence $\mathcal{M},x\not \Vdash \top \rhd
   \Diamond \top$.

 \item The proof of the claim is trivial for both the Boolean
   connectives and the modal operator $\Box$. 
\end{itemize}
Now that the claim is established the lemma follows immediately.
\end{proof}

\section{Computational complexity of $\il_0$}
First we obtain PSPACE hardness of \il.
\subsection{PSPACE hardness}

If we combine Lemma \ref{lemma:translationPreservesTruth} and Lemma
\ref{lemma:translationAndModels}
we see that we have a reduction of $\gl_1$ to $\il_0$. That is, for any formula $A$ with at most one variable $p$ we have that
\[
\gl \vdash A \ \ \Leftrightarrow \ \ \il \vdash A^{\dag}.
\]
As it is known that $\gl_1$ is PSPACE complete (see~\cite[Theorem~7]{ChaRy03} and \cite{Svejdar2003}) we obtain the main result of this paper.

\begin{theorem}
The computational complexity of $\il_0$ is PSPACE hard.
\end{theorem}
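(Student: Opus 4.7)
The plan is to package the two preceding lemmas into a many-one reduction from $\gl_1$ to $\il_0$ and then appeal to the known PSPACE-completeness of $\gl_1$. Concretely, Lemma~\ref{lemma:translationPreservesTruth} and Lemma~\ref{lemma:translationAndModels} together yield the equivalence
\[
  \gl \vdash A \ \ \Longleftrightarrow \ \ \il \vdash A^{\dag}
\]
for every $\gl$-formula $A$ containing at most the propositional variable $p$. Since $A^{\dag}$ is by construction a closed formula of the language of \il, the right-hand side is equivalent to $A^{\dag} \in \il_0$. So the map $A \mapsto A^{\dag}$ reduces membership in $\gl_1$ to membership in $\il_0$.

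The second step is to verify that the translation $\dag$ can be computed in polynomial time (in fact in logspace, which is all one needs for a PSPACE-hardness reduction). Inspecting the clauses of the definition of $\dag$, each syntactic constructor of $A$ is replaced by a constant-size piece of \il-syntax: the atom $p$ is replaced by the fixed closed formula $\Diamond \Diamond \top \to (\top \rhd \Diamond \top)$, each occurrence of $\Box$ is prefixed onto the fixed context $\Diamond \Diamond \top \to (\cdot)$, and implication is preserved. Hence $|A^{\dag}|$ is linear in $|A|$ and $A^{\dag}$ can be produced by a straightforward recursion on the structure of $A$.

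The third and final step is to invoke the result, attributed in the paper to~\cite[Theorem~7]{ChaRy03} and~\cite{Svejdar2003}, that $\gl_1$ is PSPACE-complete, in particular PSPACE-hard. Composing the PSPACE-hardness of $\gl_1$ with the polynomial-time reduction $A \mapsto A^{\dag}$ gives PSPACE-hardness of $\il_0$, which is the desired conclusion.

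There is no genuine obstacle here; the whole content of the theorem has already been discharged by Lemmas~\ref{lemma:translationPreservesTruth} and~\ref{lemma:translationAndModels}. The only thing one must be mildly careful about is the observation that the translation is efficient enough to count as a reduction in the complexity-theoretic sense, and that point is immediate from the recursive definition of $\dag$.
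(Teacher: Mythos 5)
Your proof is correct and follows essentially the same route as the paper: combining Lemma~\ref{lemma:translationPreservesTruth} and Lemma~\ref{lemma:translationAndModels} into the equivalence $\gl \vdash A \Leftrightarrow \il \vdash A^{\dag}$ and invoking the PSPACE-completeness of $\gl_1$. Your explicit check that $\dag$ is a polynomial-time (indeed linear-size) reduction is a small point the paper leaves implicit, but it is the same argument.
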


If in addition to this we would know that \il is in PSPACE we would obtain PSPACE completeness. It is commonly held that indeed the complexity of full \il is PSPACE-compleet, but up to now nobody has yet proven this. It came as a bit of a surprise to the authors to find out that actually no complexity results in the field of interpretability logics are known. Thus, this short note could well be the precursor to further investigations in various interpretability logics on very natural complexity questions in the otherwise mature field of interpretability logic.

\subsection{On a normal form theorem for $\il_0$}
The PSPACE completeness of $\il_0$ does a-priori not exclude the possibility of a normal form theorem of \il. It is even conceivable that there exists some easily recognizable class of normal forms for $\il_0$ so that each formula in the language of $\il_0$ is  equivalent to a small sized boolean combination of these normal forms. In such a case the normal forms themselves may be easy and even easily comparable but then for an arbitrary formula it still remains hard (PSPACE) to see actually \emph{what} combination of normal forms it is provably equivalent to. These observations render a normal form theorem for $\il_0$ --if it would exist-- useless for most practical purposes.



\end{document}